\newtheorem{prob}{Problem}[section]
\newtheorem{theorem}{Theorem}[section]
\newtheorem{proposition}{Proposition}[section]
\newtheorem{cor}{Corollary}[section]
\newtheorem{lem}{Lemma}[section]
\numberwithin{table}{section}
\begin{document}

\title{Infinitely many prime knots with the same Alexander invariants.}

\author{Louis H. Kauffman\\
Department of Mathematics, Statistics and Computer Science, \\
University of Illinois at Chicago, 851 S. Morgan St., \\
Chicago IL 60607-7045, USA \\
\texttt{kauffman@uic.edu}\\
and\\
Pedro Lopes\\
Center for Mathematical Analysis, Geometry, and Dynamical Systems, \\
Department of Mathematics, \\
Instituto Superior T\'{e}cnico, Universidade de Lisboa\\
1049-001 Lisbon, Portugal \\
\texttt{pelopes@math.tecnico.ulisboa.pt}}
\date{March 18, 2017}
\maketitle

\begin{abstract}
We  revisit the issue of the existence of infinitely many distinct prime knots with the same Alexander invariant. We present infinitely many distinct families, each family made up of infinitely many distinct knots. Within each family, the Alexander invariant is the same. Unlike other examples in the literature, ours are elementary and based on a sub-collection of pretzel knots with three tassels.
\end{abstract}

Keywords: knots, prime knots, Alexander polynomial, Alexander invariants, elementary ideals, Jones polynomial.

Mathematics Subject Classification 2010: 57M27

\section{Introduction}\label{sec:intro}

In  this article we present infinitely many families of infinitely many distinct prime knots with the following property. Each of these families is characterized by its knots all having the same elementary ideals.  We prove the knots are distinct by calculating their Jones polynomial.

Seifert showed, by using the Seifert pairing and putting appropriate bands on a disk, that one could obtain any specified Seifert matrix. Hence one could obtain any Alexander module (in later language), \cite{Seifert}. It was not in Seifert's power to prove that this method produced infinitely many knots with the same Seifert matrix, but he did show how to produce
non-trivial knots with unit Alexander polynomial and pairs of knots with the same Alexander polynomial. Seifert's knotted band and twist method surely produces infinitely many prime knots with the same Seifert pairing. We will reserve proving that for a sequel to the present paper (due to the technicalities of applying the Jones polynomial to this situation). In the present paper we will use a different approach via pretzel knots that produces infinitely many prime knots with the same sequence of elementary ideals.

Techniques for constructing infinite families of distinct knots with the same Alexander polynomials, or the same Alexander modules, have been known for some time.  However, the knots that arise from these constructions tend to be quite complicated as well as the proofs of the corresponding facts (\cite{AitchisonSilver}, \cite{Friedl}, \cite{HittSilver}, \cite{Livingston1981}, \cite{Livingston2002}, \cite{Morton}, \cite{Sto}).  Here we will demonstrate that there are infinite sets of  pretzel knots with three tassels, with isomorphic Alexander modules.  These have the property that all the knots are genus one, prime, and hyperbolic.   We discriminate these knots by calculating their Jones polynomials. We are aware of the telling apart of pretzel knots by other methods (\cite{Kawauchi-Pretzels}) but, here again, the proofs are longer and more involved than our straightforward calculations of the Jones polynomial via skeining.

\bigbreak

In the following we will let the expression ``knot'' stand for knot or link except where otherwise noted.

Consider the following problem.
\begin{prob}\label{prob:prob1}
Find an infinite  family $\cal F$ of distinct knots, all having the same Alexander polynomial.
\end{prob}


Due to  the fact that the Alexander polynomial is multiplicative with respect to connected sums of knots, it is easy to construct an infinite sequence of knots with the same Alexander polynomial, given a knot with the indicated Alexander polynomial. It is a matter of making connected sums of the given knot with non-trivial knots with trivial Alexander polynomial, like the Kinoshita-Terasaki knot, or Conway's knot. The problem becomes more interesting when we try to solve it over more specific subfamilies of knots. To the best of our knowledge, the first such attempt is \cite{Morton} where an infinite family of distinct fibered knots with the same Alexander polynomial is presented. In \cite{Sto}  Problem \ref{prob:prob1} is solved for generic Alexander polynomial with the solutions being provided over the family of arborescent knots and with a Table displaying the state-of-the-art concerning the existence or not of solutions when we restrict the solutions to the family of fibered knots, knots with fixed genus, knots with fixed number of components, etc.

In this article we generalize Problem \ref{prob:prob1} to the following.

\begin{prob}\label{prob:prob2}
Find an infinite  family  $\cal F$ of distinct knots, all having the same sequence of elementary ideals.
\end{prob}

In this article we solve Problem \ref{prob:prob2} for infinitely many families of knots. Specifically, we present infinitely many families of infinitely many distinct prime knots of genus $1$ with the same Alexander polynomial, per family. The Alexander polynomial at issue is $A - (2A-1)t + At^2$ with  different $A$'s for different families. Given a positive integer $s$,  it is realized by the family of pretzel knots of the sort $P(-2s-1, 2s+1, 2i+1)$ for each positive integer $i>s+3/2$, where $A=-s(s+1)$. Furthermore, within each of these families, we single out an infinite subfamily whose knots all have the same  sequence of elementary ideals. Moreover, the issue of telling these knots apart is simply solved here by  calculating the corresponding Jones polynomials via skein theory.

We start by stating and proving the following features of these knots.

\begin{proposition}
Given integers $i>s\geq 1$, the pretzel knot $P(-2s-1, 2s+1, 2i+1)$ is a prime knot of genus $1$, hyperbolic and not fibered.
\end{proposition}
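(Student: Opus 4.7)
The plan is to establish each of the four properties (genus one, not fibered, prime, hyperbolic) in turn, invoking standard facts about three-tassel pretzel knots to keep the argument short.

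For the genus statement I would apply Seifert's algorithm to the standard pretzel diagram of $P(-2s-1,2s+1,2i+1)$. Since all three tassel parameters are odd, this produces a Seifert surface consisting of two disks joined by three twisted bands, which has Euler characteristic $-1$ and hence genus $1$. To rule out genus $0$, I would use the Alexander polynomial formula $A-(2A-1)t+At^2$ with $A=-s(s+1)$ asserted in the introduction (and computed later in the paper): it has degree $2$, so $g\geq 1$, forcing $g=1$.

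Non-fiberedness then follows instantly from the classical criterion that a fibered knot has monic Alexander polynomial. Here the leading coefficient is $A=-s(s+1)$, and for $s\geq 1$ one has $|A|=s(s+1)\geq 2$, so the polynomial is not monic and the knot is not fibered.

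For primeness and hyperbolicity, I would appeal to the known structure theory of pretzel knots. All three tassel integers in $P(-2s-1,2s+1,2i+1)$ satisfy $|p_j|\geq 3$ for $s\geq 1$ and $i>s$, and no pair of them is such that the knot reduces to a two-bridge knot or a connected sum; the standard primeness criteria for three-tassel pretzel knots with all $|p_j|\geq 2$ (as in the work cited in the introduction) then give primality. For hyperbolicity, I would invoke the Bonahon-Siebenmann result that a three-tassel pretzel link with all $|p_j|\geq 2$ is either a torus knot or hyperbolic; since the only genus-one torus knots are the trefoils $T(2,\pm 3)$ and the trefoil is fibered, the non-fiberedness just established rules out the torus case, leaving hyperbolicity.

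The main obstacle is not any one calculation but rather lining up the right classical citations so that they apply uniformly to the whole two-parameter family; the genus and fiberedness parts are essentially direct, while the prime/hyperbolic parts reduce to checking that the tassels are large enough in absolute value and that the torus-knot case is excluded by an independent invariant (here, the non-monic Alexander polynomial).
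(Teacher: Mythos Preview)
Your argument is correct, and for three of the four properties it tracks the paper closely (non-fiberedness via the non-monic Alexander polynomial is identical; for genus you give a self-contained Seifert-algorithm-plus-degree-bound argument where the paper simply cites Kim--Lee; for hyperbolicity you invoke the torus-or-hyperbolic dichotomy for simple Montesinos knots and exclude the torus case by non-fiberedness, whereas the paper again cites Kim--Lee directly). The one place where you take a genuinely longer road is primeness: the paper observes that since the genus is $1$ and genus is additive under connected sum, the knot cannot be a nontrivial connected sum --- a one-line consequence of the genus computation you already did. Your appeal to structural/classification results for pretzel knots with large tassels is not wrong, but it is both vaguer and heavier than needed; once you have $g=1$, primeness is free.
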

\begin{proof}
Leaning on Corollary $2.7$, statement $2.$ in \cite{KL}, we know the knots under consideration are all of genus $1$. Since the genus is additive under connected sums, we conclude that these knots are all prime. Leaning on Theorem $2.2$ in \cite{KL}, we know these knots are hyperbolic. Furthermore, since the Alexander polynomials of $P(-2s-1, 2s+1, 2i+1)$ (\cite{Lickorish}, pages 56 and 57) are not monic, then these knots are not fibered, \cite{Rapaport}.
\end{proof}

The referee drew our attention to the article by Landvoy (\cite{Landvoy}). A short description of this article now follows. The purpose of \cite{Landvoy} is to present expressions for the calculation of the Jones polynomial of pretzel knots by way of the Kauffman bracket. Closed form formulas are obtained for the pretzel knots with three tassels. These formulas enable one to tell apart the elements of a family of pretzel knots on three tassels whose Alexander polynomials are trivial. The triviality of these Alexander polynomials had already been flagged by Parris in his PhD thesis (\cite{Parris}).  On the other hand, in the current article we set out to obtain families of pretzel knots in three tassels with the same Alexander ideals; also, the Alexander polynomials of our examples are not trivial. Furthermore, we use the Jones polynomial to tell apart the individual pretzel knots we work with. Finally, we calculate the Jones polynomial directly via skeining.

\bigbreak

The rest of this article is organized as follows. In Section \ref{sec:exampleAlex} we present a first example of a family of pretzel knots with the same Alexander polynomial and extract an infinite subfamily whose knots all have the same sequence of elementary ideals. In Section \ref{sec:exampleJones} we show that the knots from the family presented in Section \ref{sec:exampleAlex} are told apart by their Jones polynomial. In Section \ref{sec:infty} we show that the results of Sections \ref{sec:exampleAlex} and \ref{sec:exampleJones} generalize to infinitely many families of knots. In Section \ref{sec:furtherwork} we point out directions for future work.

\section{The example. Calculating the Alexander invariants.}\label{sec:exampleAlex}

Consider the pretzel knots in Figure \ref{fig:pretzelfamily}.

\begin{figure}[!ht]
	\psfrag{P(-3, 3, 3)}{\Huge$P(-3, 3, 3)$}
	\psfrag{P(-3, 3, 2i+1)}{\Huge$P(-3, 3, 2i+1)$}
	\psfrag{2i+1}{\huge$2i+1$}
	\psfrag{eq}{\Huge$c=2b-a$}
	\centerline{\scalebox{.5}{\includegraphics{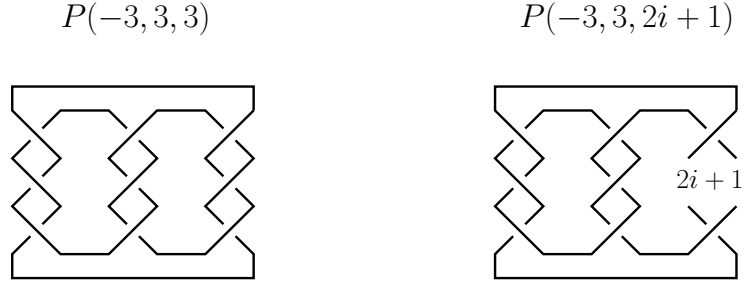}}}
	\caption{The pretzels knots $P(-3, 3, 3)$ and $P(-3, 3, 2i+1)$ for non-zero integer $i$.}\label{fig:pretzelfamily}
\end{figure}

\begin{theorem}\label{thm:exampleAlex}
Let $i$ be a positive integer. The knots from the family $\big(  P(-3, 3, 2i+1) \big)_{i\in \mathbf{Z}^{+}}$ have the same Alexander polynomial, $2t^2 - 5t + 2$.
Furthermore, if we choose $i=i_k=3k-1$, then the knots from the subfamily $\big(  P(-3, 3, 2i_k+1) \big)_{k\in \mathbf{Z}^+}$ have the same elementary ideals.
\end{theorem}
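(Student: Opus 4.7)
The plan is to read off a $2\times 2$ Seifert matrix from the natural genus-one Seifert surface of $P(-3,3,2i+1)$ and to extract both the Alexander polynomial and the first elementary ideal from a direct computation; the key structural fact I will exploit is that one diagonal entry of the Seifert matrix vanishes because $-3+3=0$.

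First, I would draw the standard Seifert surface for a three-odd-tassel pretzel (a disk plumbed with two twisted bands). With the usual basis of $H_{1}$, the Seifert matrix of $P(a,b,c)$ for $a,b,c$ odd takes the form
\[
V \;=\; \begin{pmatrix} (a+b)/2 & (b+1)/2 \\ (b-1)/2 & (b+c)/2 \end{pmatrix},
\]
a formula I would sanity-check against the trefoil $P(1,1,1)$. Specializing to $a=-3,\ b=3,\ c=2i+1$ yields
\[
V \;=\; \begin{pmatrix} 0 & 2 \\ 1 & i+2 \end{pmatrix},\qquad tV-V^{T} \;=\; \begin{pmatrix} 0 & 2t-1 \\ t-2 & (i+2)(t-1) \end{pmatrix}.
\]
The Alexander polynomial is $\det(tV-V^{T}) = -(2t-1)(t-2)$, i.e.\ $2t^{2}-5t+2$ up to a unit, and is manifestly independent of $i$. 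This handles the first assertion.

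For the elementary ideals, the Alexander module of a genus-one knot is presented by the $2\times 2$ matrix $tV-V^{T}$, so the only possibly nontrivial invariants are $\mathcal{E}_{0}=(\Delta)$ and the ideal $\mathcal{E}_{1}$ generated by the entries of the matrix:
\[
\mathcal{E}_{1} \;=\; \bigl(2t-1,\; t-2,\; (i+2)(t-1)\bigr).
\]
The identity $2(t-2)-(2t-1)=-3$ shows $3\in\mathcal{E}_{1}$, and the decomposition $(i+2)(t-1)=(i+2)(t-2)+(i+2)$ together with $t-2\in\mathcal{E}_{1}$ gives $i+2\in\mathcal{E}_{1}$. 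I would then conclude
\[
\mathcal{E}_{1} \;=\; \bigl(t-2,\;\gcd(3,i+2)\bigr),
\]
depending on $i$ only through $i+2\pmod 3$. Substituting $i=i_{k}=3k-1$ yields $i_{k}+2\equiv 1\pmod 3$, so $\gcd(3,i_{k}+2)=1$ and $\mathcal{E}_{1}=(1)$ for every $k$. Combined with the constancy of $\mathcal{E}_{0}$ and the trivial vanishing $\mathcal{E}_{k}=(1)$ for $k\geq 2$ (forced by the size of the presentation matrix), all elementary ideals are the same along the subfamily.

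The main obstacle I anticipate is pinning down the correct Seifert matrix from the pretzel diagram with the right sign and basis conventions, so that the upper-left zero---responsible for killing the $i$-dependence of the determinant---is transparent. Once that matrix is in hand, both assertions reduce to the short minor-ideal computations above.
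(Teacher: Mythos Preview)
Your approach is the paper's: take the genus-one Seifert matrix $V=\begin{pmatrix}0&2\\1&i+2\end{pmatrix}$, form $tV-V^{T}$, read off $\Delta=-(2t-1)(t-2)$ from its determinant, and take the next elementary ideal to be the ideal generated by the entries. The only divergence is in how that entry ideal is simplified. The paper uses the identity $3(t-1)=(2t-1)+(t-2)$ to absorb the diagonal entry into $(2t-1,\,t-2)$ and declares this to be the common ideal along the subfamily; you instead reduce all the way to $\mathcal{E}_1=(t-2,\,\gcd(3,i+2))$, and since $i_k+2=3k+1$ is coprime to $3$ you obtain $\mathcal{E}_1=(1)$. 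The two answers disagree because the paper's displayed computation slips from the entry $(i+2)(t-1)$ to $(i_k+1)(t-1)$; with the correct entry $(i_k+2)(t-1)=(3k+1)(t-1)$ your reduction is the accurate one, and the ideal is the unit ideal rather than $(2t-1,\,t-2)$. Either way the ideal is visibly independent of $k$, which is all the theorem asserts, so both arguments establish the statement; your version has the advantage of pinning down $\mathcal{E}_1$ correctly and of making transparent exactly how it depends on $i\bmod 3$.
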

\begin{proof}
From the formulas in \cite{Lickorish}, pages 56 and 57, we obtain the following Seifert matrices for $P(-3, 3, 3)$ and $P(-3, 3, 2i+1)$, respectively
\[
\begin{pmatrix}
0 & 2\\
1 & 3
\end{pmatrix} \qquad \text{ and } \qquad \begin{pmatrix}
0 & 2\\
1 & i+2
\end{pmatrix}
\]
from which we obtain the presentation matrices for the Alexander modules of $P(-3, 3, 3)$ and $P(-3, 3, 2i+1)$, respectively,
\[
\begin{pmatrix}
0 & 2t-1\\
t-2 & 3t-3
\end{pmatrix} \qquad \text{ and } \qquad \begin{pmatrix}
0 & 2t-1\\
t-2 & (i+2)t-(i+2)
\end{pmatrix}
\]

Denoting the Alexander polynomial by $\Delta$, we obtain:
\[
\Delta(P(-3, 3, 3)) \doteq \det \begin{pmatrix}
0 & 2t-1\\
t-2 & 3t-3
\end{pmatrix} = -(2t-1)(t-2)
\]
and
\[
\Delta(P(-3, 3, 2i+1)) \doteq \det \begin{pmatrix}
0 & 2t-1\\
t-2 & (i+1)t-(i+1)
\end{pmatrix} = -(2t-1)(t-2)
\]
Furthermore, since $3t-3 = (2t-1) + (t-2)$, the $2$nd elementary ideal of $P(-3, 3, 3)$ is generated by $t-2$ and $2t-1$. On the other hand, if we choose $i=i_k=3k-1$ for any positive integer $k$, then $(i_k+1)t-(i_k+1) = k(2t-1 + t-2)$. Hence, the knots from the subfamily $\big(  P(-3, 3, 2i_k+1) \big)_{k\in \mathbf{Z}^+}$  have the same $2$nd elementary ideals, which are generated by $t-2$ and $2t-1$.

This concludes the proof.
\end{proof}

\section{The example. Calculating the Jones invariant.}\label{sec:exampleJones}

We now prove that the knots from the family $\big(  P(-3, 3, 2i+1) \big)_{i\in \mathbf{Z}^{+}, i>2}$ are all distinct by showing that no two of them have the same Jones polynomial. We resort to the skein relations in order to compute the Jones polynomial. See, for instance \cite{Lickorish}, Proposition $3.7$ on page 28. Denoting the Jones polynomial by $V$, we have that the Jones polynomial of the unknot is identically $1$, and that
\[
t^{-1}V(L_{+}) - t V(L_{-}) + (t^{-1/2}-t^{1/2})V(L_{0}) = 0
\]
where $L_{+}$, $L_{-}$, and $L_{0}$ stand for $3$ diagrams which are everywhere the same but for the neighborhoods indicated in Figure \ref{fig:skeindiags}.

\begin{figure}[!ht]
	\psfrag{+}{\Huge$L_{+}$}
	\psfrag{-}{\Huge$L_{-}$}
	\psfrag{0}{\huge$L_{0}$}
	\psfrag{eq}{\Huge$c=2b-a$}
	\centerline{\scalebox{.5}{\includegraphics{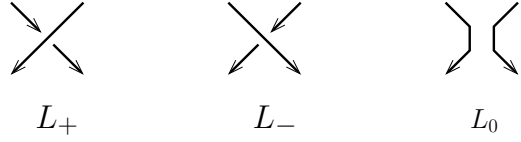}}}
	\caption{The differences in diagrams $L_+$, $L_-$, and $L_0$ occur at the indicated neighborhoods.}\label{fig:skeindiags}
\end{figure}

Lemma \ref{lem:T(2, 2k)} is an auxiliary result which is probably already known. We include it here for completeness.

\begin{lem}\label{lem:T(2, 2k)}
Given a positive integer $k>1$, The Jones polynomial of the torus link of type $(2, 2k)$ with the orientation shown in Figure \ref{fig:sigma1-4} is
\[
V(T(2, 2k)) = -t^{-1/2}\bigg(  t^{-2k}  + t^{-2k+2} + \sum_{i=0}^{2k-3} (-1)^i t^{-i}\bigg)
\]
\end{lem}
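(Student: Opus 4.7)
The plan is to prove the formula by induction on $k$, using Lickorish's skein relation applied at one crossing of the braid diagram for $T(2,2k)$ shown in Figure~\ref{fig:sigma1-4}. The key observation is that with the orientation in the figure the two strands at each crossing of the braid are antiparallel, so as an oriented crossing it is negative (consistent with the Jones polynomial having only negative powers of $t$) and, crucially, the oriented resolution at that crossing is the horizontal (cap-cup) smoothing rather than the vertical one.

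Under this observation I would identify the three terms of the skein relation as follows. $L_-$ is the given $T(2,2k)$. $L_+$, obtained by reversing the chosen crossing, forms a Reidemeister~II pair with either of its two neighbours, and after the cancellation the resulting diagram is $T(2,2(k-1))$ with the same orientation convention. And $L_0$, the cap-cup resolution of an antiparallel crossing, fuses the two components of the link into a single strand carrying $2k-1$ self-crossings which are all Reidemeister~I kinks, so $L_0$ is the unknot and $V(L_0) = 1$. Substituting these identifications into the skein relation and solving for $V(T(2,2k))$ gives the clean recursion
\[
V(T(2,2k)) \;=\; t^{-2}\, V(T(2,2(k-1))) + t^{-3/2} - t^{-1/2}.
\]

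For the base case I would take $k=1$, where $T(2,2)$ is the Hopf link and the formula (with the empty sum interpreted as zero) predicts $V(T(2,2)) = -t^{-5/2} - t^{-1/2}$; this is the standard Jones polynomial of the Hopf link with the orientation of the figure, obtainable by a single application of the skein relation from the two-component unlink and the unknot. The inductive step is then a short algebraic check: multiplying $V(T(2,2(k-1)))$ by $t^{-2}$ shifts every $t$-exponent inside the bracket down by two, turning the leading $t^{-2k+2} + t^{-2k+4}$ into $t^{-2k} + t^{-2k+2}$ and re-indexing the alternating sum from $i=0,\ldots,2k-5$ into $j=2,\ldots,2k-3$; the leftover $t^{-3/2} - t^{-1/2} = -t^{-1/2}(1 - t^{-1})$ supplies exactly the missing $j=0$ and $j=1$ terms of the new alternating sum.

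The only subtle point is the orientation argument. If the strands at each crossing were parallel rather than antiparallel, $L_0$ would be the torus knot $T(2,2k-1)$ instead of the unknot, and the recursion would couple $V(T(2,2k))$ to $V(T(2,2k-1))$, forcing a simultaneous induction together with a companion formula for the odd torus knots. The polynomial displayed in the lemma---with all negative powers of $t$ and with the particularly clean inhomogeneous term $t^{-3/2} - t^{-1/2}$---is consistent only with the antiparallel reading, which is the one I would adopt.
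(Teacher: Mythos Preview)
Your proposal is correct and follows essentially the same skein induction as the paper: identify $L_- = T(2,2k)$, $L_+ = T(2,2(k-1))$ (after a Reidemeister~II cancellation), and $L_0$ the unknot, then solve the skein relation for $V(T(2,2k))$ and induct. The paper takes $k=2$ as its base case, citing the Hopf link value $-t^{-5/2}-t^{-1/2}$ as known, whereas you start at $k=1$ and observe that the stated formula with the empty-sum convention already reproduces that value; your explicit discussion of why the antiparallel orientation forces $L_0$ to be an unknot (the remaining $2k-1$ crossings becoming Reidemeister~I kinks) is precisely what the paper encodes in its figures.
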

\begin{figure}[!ht]
	\psfrag{L+}{\Huge$L_{+}$}
	\psfrag{L-}{\Huge$L_{-}$}
	\psfrag{L0}{\huge$L_{0}$}
	\psfrag{eq}{\Huge$c=2b-a$}
	\centerline{\scalebox{.5}{\includegraphics{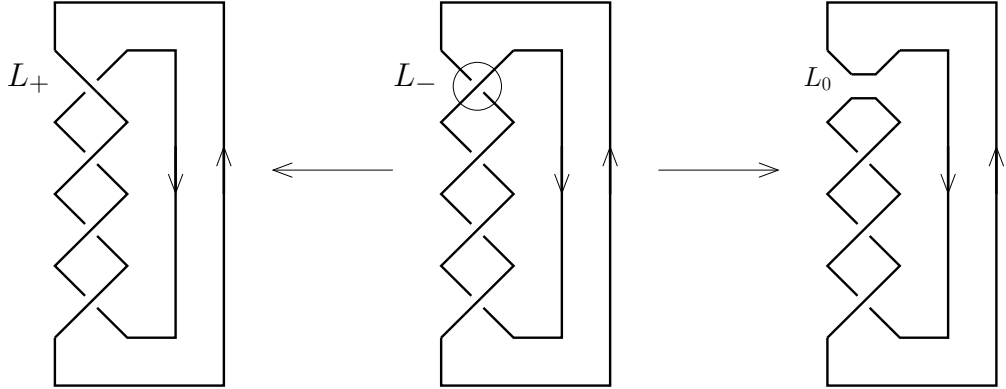}}}
	\caption{Torus link of type $(2, 4)$  in the center diagram. We begin the skeining at the crossing surrounded by the circle. The leftmost diagram is  the Hopf link; the rightmost diagram is the unknot.}\label{fig:sigma1-4}
\end{figure}
\begin{proof}
We leave it for the reader to prove that the Jones polynomial of the Hopf Link with the orientation illustrated in Figure \ref{fig:sigma1-4} is $-t^{-5/2}-t^{-1/2}$.

With the remarks from the caption in Figure \ref{fig:sigma1-4} we conclude that
\[
t^{-1}(-t^{-5/2}-t^{-1/2}) - tV(T(2, 4)) + (t^{-1/2}-t^{1/2})\cdot 1 = 0
\]
so
\[
V(T(2, 4)) = t^{-2}(-t^{-5/2}-t^{-1/2}) + t^{-1}(t^{-1/2}-t^{1/2}) = -t^{-4-1/2} - t^{-2-1/2} + t^{-1-1/2} - t^{-1/2}
\]

Now for the inductive step.
\begin{figure}[!ht]
	\psfrag{L+}{\Huge$L_{+}$}
	\psfrag{L-}{\Huge$L_{-}$}
	\psfrag{L0}{\huge$L_{0}$}
	\psfrag{2k+2}{\Huge$2k+2$}
	\centerline{\scalebox{.5}{\includegraphics{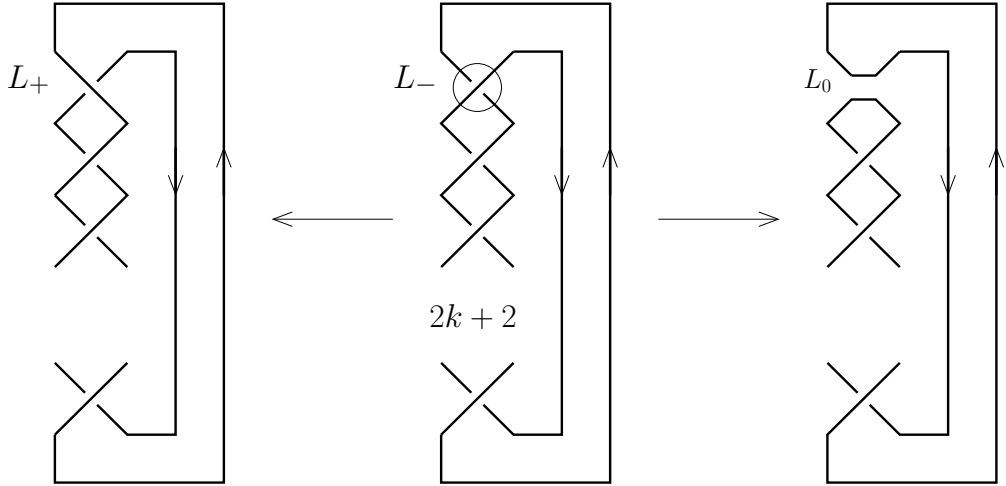}}}
	\caption{Torus link of type $(2, 2k+2)$ in the center diagram. Skeining at the crossing surrounded by the circle we obtain the following. The leftmost diagram is  the $T(2, 2k)$ link; the rightmost diagram is the unknot.}\label{fig:sigma1-2k}
\end{figure}
With the remarks from the caption of Figure \ref{fig:sigma1-2k} we obtain
\[
t^{-1}V(T(2, 2k)) - tV(T(2, 2k+2)) + (t^{-1/2}-t^{1/2})\cdot 1 = 0
\]
so
\begin{align*}
&V(T(2, 2k+2)) = t^{-2}(-t^{-1/2})\bigg(  t^{-2k}  + t^{-2k+2} + \sum_{i=0}^{2k-3} (-1)^i t^{-i}\bigg)  + t^{-1}(t^{-1/2}-t^{1/2}) = \\
& = -t^{-1/2}\bigg(  t^{-2k-2}  + t^{-2k} + \sum_{i=0}^{2k-3} (-1)^i t^{-i-2}   - t^{-1} + 1 \bigg) = \\
& = -t^{-1/2}\bigg(  t^{-2k-2}  + t^{-2k} + \sum_{i'=2}^{2k-1} (-1)^{i'-2} t^{-i'}   - t^{-1} + 1 \bigg) =\\
& = -t^{-1/2}\bigg(  t^{-(2k+2)}  + t^{-(2k+2)-2} + \sum_{i'=0}^{2k+2-3} (-1)^{i'} t^{-i'}   \bigg)
\end{align*}
This concludes the proof.
\end{proof}

\begin{theorem}\label{thm:Pretzelknots}
Let $i>2$ be an integer. The Jones polynomial of the pretzel knot $P(-3, 3, 2i+1)$ is
\[
t^{-2i-4} - t^{-2i-3} + t^{-2i-2} - 2t^{-2i-1} + t^{-2i} - t^{-2i+1} + t^{-2i+2} + 1
\]
\end{theorem}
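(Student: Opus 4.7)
The plan is to prove the formula by induction on $i\geq 3$, via a single application of the oriented Jones skein relation at a crossing inside the third tassel of $P(-3,3,2i+1)$. Orient the knot so that the two strands through each tassel are anti-parallel (the standard orientation on a three-tassel pretzel knot with three odd tassels); then all $2i+1$ crossings of the third tassel share a common sign. Skein one such crossing: the resolution that flips that crossing produces a diagram whose third tassel has $2i$ crossings of the original sign and one of the opposite sign, which by a Reidemeister I cancellation inside the tassel is isotopic to $P(-3,3,2i-1)$.

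The key identification is the oriented smoothing $L_0$. Because the two strands at the crossing are anti-parallel, $L_0$ joins the four tangent arcs into a cup and a cap (rather than reducing the twist count), cutting the third tassel in half at the chosen crossing. Each resulting half-tassel is a closed twist region with one free pair of endpoints, and each twist inside it becomes an isolable Reidemeister I kink; pulling these curls straight turns both halves into trivial arcs joining the former top endpoints and bottom endpoints of the third tassel. What remains is the two-tassel pretzel $P(-3,3)=T(2,0)$, that is, the two-component unlink, whose Jones polynomial is $-t^{1/2}-t^{-1/2}$.

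Substituting into the skein relation and writing $V_i:=V(P(-3,3,2i+1))$, I obtain the first-order recurrence
\[
V_i \;=\; t^{-2}\,V_{i-1} + (1-t^{-2}),
\]
equivalently $V_i - 1 = t^{-2}(V_{i-1}-1)$. Iterating from $i=3$ telescopes to $V_i - 1 = t^{-2(i-3)}(V_3-1)$, which matches the claimed closed form once we verify the base case $V_3 = V(P(-3,3,7)) = t^{-10}-t^{-9}+t^{-8}-2t^{-7}+t^{-6}-t^{-5}+t^{-4}+1$.

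The base case is then computed by further skein reductions, this time at crossings in the first or second tassel of $P(-3,3,7)$; these $L_0$ resolutions are torus links of type $T(2,2k)$, whose Jones polynomials are supplied by Lemma~\ref{lem:T(2, 2k)}. The main obstacle I expect is the orientation bookkeeping: one must confirm that under the chosen orientation the sign of the third-tassel crossings is the one that yields the recurrence with $t^{-2}$ rather than $t^{2}$, and must verify carefully that the $L_0$ resolution really isotopes to $P(-3,3)$ without introducing spurious linking with the first two tassels. Once these points are settled, the remainder of the argument is a routine induction and an algebraic check against the stated polynomial.
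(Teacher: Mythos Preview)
Your plan is sound, and the key identification is correct: smoothing an (anti-parallel) crossing in the third tassel does produce cap/cup arcs that, after Reidemeister~I reductions, leave the two-tassel pretzel $P(-3,3)=T(2,0)$, the two-component unlink. With the paper's orientation (the same one that makes the second-tassel crossings negative in Figures~\ref{fig:skeiningpretzels}--\ref{fig:skeiningpretzelscontd}), the third-tassel crossings are likewise negative, so the skein relation indeed yields
\[
V_i \;=\; t^{-2}V_{i-1} + (1-t^{-2}),\qquad\text{i.e.}\quad V_i-1=t^{-2}(V_{i-1}-1),
\]
and the telescoping you describe recovers the stated formula from a single base value.

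This is a genuinely different route from the paper's. The paper skeins twice in the \emph{second} tassel, reducing $P(-3,3,2i+1)$ directly to a combination of the unknot and the torus links $T(2,2i\pm2)$ (handled by Lemma~\ref{lem:T(2, 2k)}), and then simplifies algebraically; no induction on $i$ is used. Your approach trades that algebra for a one-line recurrence in $i$, at the cost of still needing a full computation for one base value (your $V_3$, or equally well $V_1=V(P(-3,3,3))$, since the recurrence is valid for all $i\ge1$). That base computation is essentially the paper's calculation specialised to a single $i$, so your method does not avoid the torus-link input but localises it. Conceptually your argument also makes transparent \emph{why} increasing $i$ shifts the nonconstant block by $t^{-2}$ while the constant term~$1$ persists, a structural feature that is somewhat hidden in the paper's direct computation.
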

\begin{proof}
\begin{figure}[!ht]
	\psfrag{L+}{\Huge$L_{+}$}
	\psfrag{L-}{\Huge$L_{-}$}
	\psfrag{L0}{\huge$L_{0}$}
	\psfrag{2i+1}{\Huge$2i+1$}
	\psfrag{P(-3, 3, 2i+1)}{\Huge$P(-3, 3, 2i+1)$}
	\centerline{\scalebox{.5}{\includegraphics{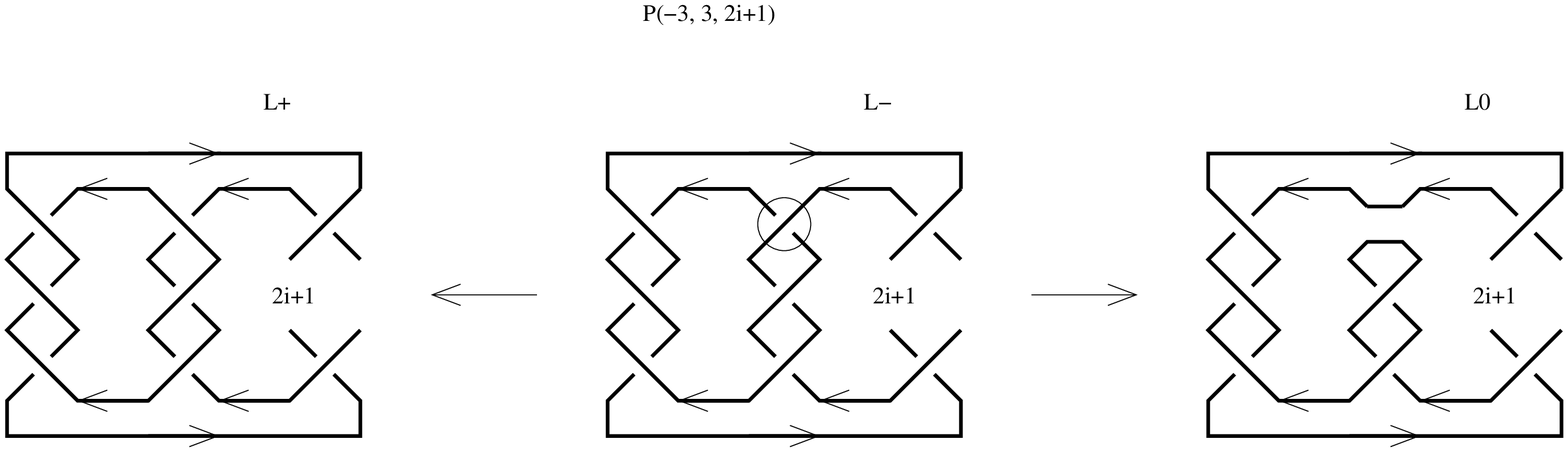}}}
	\caption{The center diagram is $P(-3, 3, 2i+1)$ with the indicated orientation. Skeining at the crossing surrounded by the circle we obtain the following. The leftmost diagram is  the $P(-3, 1, 2i+1)$; the rightmost diagram is the $T(2, 2i-2)$ link for which Lemma \ref{lem:T(2, 2k)} already provides the Jones polynomial.}\label{fig:skeiningpretzels}
\end{figure}
\begin{figure}[!ht]
	\psfrag{L+}{\Huge$L_{+}$}
	\psfrag{L-}{\Huge$L_{-}$}
	\psfrag{L0}{\huge$L_{0}$}
	\psfrag{2i+1}{\Huge$2i+1$}
	\psfrag{P(-3, 1, 2i+1)}{\Huge$P(-3, 1, 2i+1)$}
	\centerline{\scalebox{.5}{\includegraphics{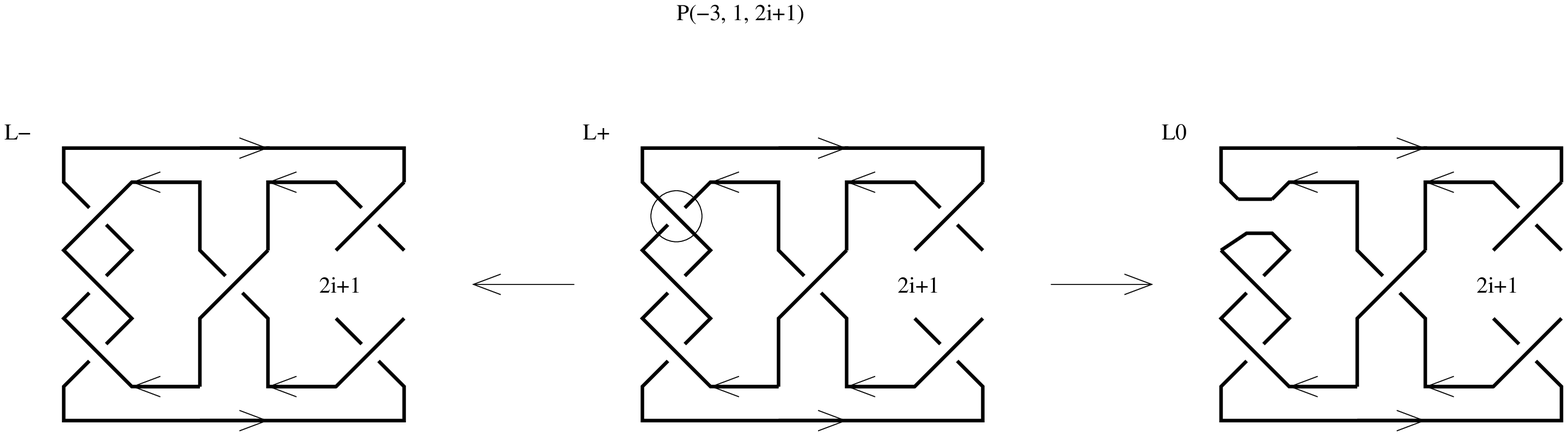}}}
	\caption{The center diagram is $P(-3, 1, 2i+1)$ with the indicated orientation. Skeining at the crossing surrounded by the circle we obtain the following. The leftmost diagram is  the unknot; the rightmost diagram is the $T(2, 2i+2)$ link for which Lemma \ref{lem:T(2, 2k)} already provides the Jones polynomial.}\label{fig:skeiningpretzelscontd}
\end{figure}

From Figure \ref{fig:skeiningpretzelscontd} we have
\begin{align}\label{eq:1}
t^{-1}V(P(-3, 1, 2i+1)) = t\cdot 1 + (t^{1/2}-t^{-1/2})(-t^{-1/2})\bigg(  t^{-2i-2}  + t^{-2i} + \sum_{j=0}^{2i-1} (-1)^j t^{-j}\bigg)
\end{align}

From Figure \ref{fig:skeiningpretzels} we have
\[
tV(P(-3, 3, 2i+1)) = t^{-1}V(P(-3, 1, 2i+1)) + (t^{-1/2}-t^{1/2})(-t^{-1/2})\bigg(  t^{-2i+2}  + t^{-2i+4} + \sum_{j=0}^{2i-5} (-1)^j t^{-j}\bigg)
\]
and composing with Equation \ref{eq:1}, we get
\begin{multline*}
V(P(-3, 3, 2i+1)) = 1 + t^{-1}(t^{1/2}-t^{-1/2})(-t^{-1/2})\bigg(  t^{-2i-2}  + t^{-2i} + \sum_{j=0}^{2i-1} (-1)^j t^{-j}\bigg) + \\
+ t^{-1}(t^{-1/2}-t^{1/2})(-t^{-1/2})\bigg(  t^{-2i+2}  + t^{-2i+4} + \sum_{j=0}^{2i-5} (-1)^j t^{-j}\bigg) = \\
= 1 + (t^{-2}-t^{-1})\bigg(  t^{-2i-2}  + t^{-2i} - t^{-2i+2}  - t^{-2i+4} + \sum_{j=0}^{2i-1} (-1)^j t^{-j} - \sum_{j=0}^{2i-5} (-1)^j t^{-j}\bigg) = \\
= 1 + (t^{-2}-t^{-1})\bigg(  t^{-2i-2}  + t^{-2i} - t^{-2i+2}  - t^{-2i+4} + \sum_{j=2i-4}^{2i-1} (-1)^j t^{-j} \bigg) = \\
= 1 + (t^{-2}-t^{-1})\bigg(  t^{-2i-2}  + t^{-2i} - t^{-2i+2}  - t^{-2i+4} - t^{-2i+1} + t^{-2i+2} - t^{-2i+3} + t^{-2i+4} \bigg) = \\
= 1 + (t^{-2}-t^{-1})\bigg(  t^{-2i-2}  + t^{-2i}   - t^{-2i+1}  - t^{-2i+3}  \bigg) = \\
= 1 +  t^{-2i-4} - t^{-2i-3} + t^{-2i-2}- t^{-2i-1}   - t^{-2i-1} + t^{-2i}- t^{-2i+1} +t^{-2i+2} = \\
= t^{-2i-4} - t^{-2i-3} + t^{-2i-2}-2t^{-2i-1}  + t^{-2i}- t^{-2i+1} +t^{-2i+2} + 1
\end{multline*}
This concludes the proof.
\end{proof}

\begin{cor}
The family of knots $\big(  P(-3, 3, 2i+1) \big)_{i\in \mathbf{Z}^{+}, i>2}$ is made up of distinct knots.
\end{cor}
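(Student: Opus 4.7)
The plan is to read the statement off directly from the formula for the Jones polynomial established in Theorem \ref{thm:Pretzelknots}. Since the Jones polynomial is a knot invariant, it suffices to show that the assignment $i \mapsto V(P(-3,3,2i+1))$ is injective on integers $i > 2$.

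First I would record the explicit polynomial
\[
V(P(-3,3,2i+1)) = t^{-2i-4} - t^{-2i-3} + t^{-2i-2} - 2t^{-2i-1} + t^{-2i} - t^{-2i+1} + t^{-2i+2} + 1
\]
and note that for $i \geq 3$ the eight exponents $-2i-4, -2i-3, -2i-2, -2i-1, -2i, -2i+1, -2i+2, 0$ are pairwise distinct (the largest of the negative exponents is $-2i+2 \leq -4 < 0$), so none of the listed monomials cancels another. In particular the coefficient of $t^{-2i-4}$ is $+1$, so the minimum exponent of $V(P(-3,3,2i+1))$, viewed as a Laurent polynomial in $t^{1/2}$, is exactly $-2i-4$.

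Next I would observe that the map $i \mapsto -2i-4$ is strictly decreasing in $i$, so for distinct integers $i, i' > 2$ the polynomials $V(P(-3,3,2i+1))$ and $V(P(-3,3,2i'+1))$ have different minimum exponents, and hence they are distinct as Laurent polynomials. Since equivalent knots have equal Jones polynomials, the knots $P(-3,3,2i+1)$ and $P(-3,3,2i'+1)$ are inequivalent, which is the desired conclusion.

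There is no real obstacle remaining: the hard work was already carried out in Lemma \ref{lem:T(2, 2k)} and Theorem \ref{thm:Pretzelknots}, and the corollary is essentially an inspection of the leading exponent. The only mild point to watch is the hypothesis $i > 2$, which is needed both for the validity of the formula in Theorem \ref{thm:Pretzelknots} and to guarantee that the displayed monomials do not collide with the constant term $1$.
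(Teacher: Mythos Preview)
Your argument is correct and is precisely the approach the paper takes: the corollary is stated as a direct consequence of Theorem~\ref{thm:Pretzelknots}, and your observation about the minimum exponent $-2i-4$ simply makes explicit the obvious injectivity that the paper leaves implicit. There is nothing to add.
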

\begin{proof}
This is a direct consequence of Theorem \ref{thm:Pretzelknots}.
\end{proof}

\section{An infinite family of infinite families}\label{sec:infty}

\subsection{Calculating the Alexander invariants.}\label{subsec:inftyAlex}

\begin{theorem}
For each positive integer $s$, the knots from the family $\big(  P(-(2s+1), 2s+1, 2i+1) \big)_{i > s}$ have the same Alexander polynomial, $s(s+1)t^2 - (2s(s+1)+1)t + s(s+1)$.
Furthermore, if we choose $i=i_{k, s}=(2k-1)s + k-1$, then the knots from the subfamily $\big(  P(-(2s+1), 2s+1, 2i_{k, s} + 1) \big)_{k\in \mathbf{Z}^+}$ have the same sequence of elementary ideals.
\end{theorem}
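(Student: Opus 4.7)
The plan is to generalize the Seifert-matrix calculation from the proof of Theorem 2.1, treating $s$ as a free parameter. First, I would invoke the Lickorish formula for pretzel knots with three odd tassels to read off a Seifert matrix of $P(-(2s+1), 2s+1, 2i+1)$, namely
\[
V = \begin{pmatrix} 0 & s+1 \\ s & s+i+1 \end{pmatrix},
\]
the $(1,1)$-entry vanishing because the first two tassel numbers are negatives of one another. Forming $tV - V^{T}$ yields the Alexander presentation matrix
\[
\begin{pmatrix} 0 & (s+1)t - s \\ st - (s+1) & (s+i+1)(t-1) \end{pmatrix},
\]
whose determinant (the Alexander polynomial, up to a unit in $\mathbf{Z}[t,t^{-1}]$) equals $-[(s+1)t-s][st-(s+1)] = s(s+1)t^{2} - (2s(s+1)+1)t + s(s+1)$. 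This quantity does not involve $i$, which settles the first assertion for the whole family indexed by $i > s$.

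For the elementary-ideal assertion, the $2$nd elementary ideal is generated by the nonzero matrix entries $(s+1)t-s$, $st-(s+1)$, and $(s+i+1)(t-1)$. The key identity
\[
[(s+1)t-s] + [st-(s+1)] = (2s+1)(t-1)
\]
shows that the first two generators alone produce an ideal containing $(2s+1)(t-1)$. Consequently, whenever $s+i+1$ is a positive-integer multiple of $2s+1$, say $s+i+1 = k(2s+1)$, the third generator is redundant. Solving $s+i+1 = k(2s+1)$ for $i$ returns precisely $i_{k,s} = (2k-1)s + (k-1)$, and for those indices the $2$nd elementary ideal collapses to $\langle (s+1)t-s,\, st-(s+1)\rangle$, which depends only on $s$. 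Together with $E_{0} = \langle \Delta \rangle$ (already common to the whole family) and the trivially identical higher $E_{j}$'s, the entire sequence of elementary ideals is the same throughout the subfamily.

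Since this is a transparent lift of the $s=1$ argument of Section \ref{sec:exampleAlex}, there is no conceptual obstacle; the only item requiring care is extracting the correct $(2,2)$-entry $s+i+1$ from Lickorish's formula, since the whole argument hinges on the precise integer that multiplies $(t-1)$. One may also wish to remark that $i_{k,s} > s$ holds whenever $k \geq 2$, so the subfamily genuinely sits inside the family indexed by $i > s$ once the boundary case $k=1$ (which would give $i_{1,s} = s$) is discarded.
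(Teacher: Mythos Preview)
Your proposal is correct and is exactly the adaptation of the $s=1$ argument that the paper has in mind; the paper's own proof of this theorem is a single sentence pointing back to Theorem~\ref{thm:exampleAlex}, and you have carried out that generalization in full, including the key identity $[(s+1)t-s]+[st-(s+1)]=(2s+1)(t-1)$ and the arithmetic $s+i+1=k(2s+1)\iff i=i_{k,s}$. Your remark that $k=1$ yields $i_{1,s}=s\not> s$ is a fair observation about the indexing of the subfamily and is worth noting, though it does not affect the argument.
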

\begin{proof}
The proof is easily adapted from the proof of Theorem \ref{thm:exampleAlex}
\end{proof}

\begin{figure}[!ht]
	\psfrag{L+}{\Huge$L_{+}$}
	\psfrag{L-}{\Huge$L_{-}$}
	\psfrag{L0}{\Huge$L_{0}$}
	\psfrag{2i+1}{\huge$2i+1$}
	\psfrag{2s+1}{\huge$2s+1$}
	\psfrag{2s-1}{\huge$2s-1$}
	\psfrag{P(-3, 3, 2i+1)}{\Huge$P(-(2s+1), 2s+1, 2i+1)$}
	\centerline{\scalebox{.5}{\includegraphics{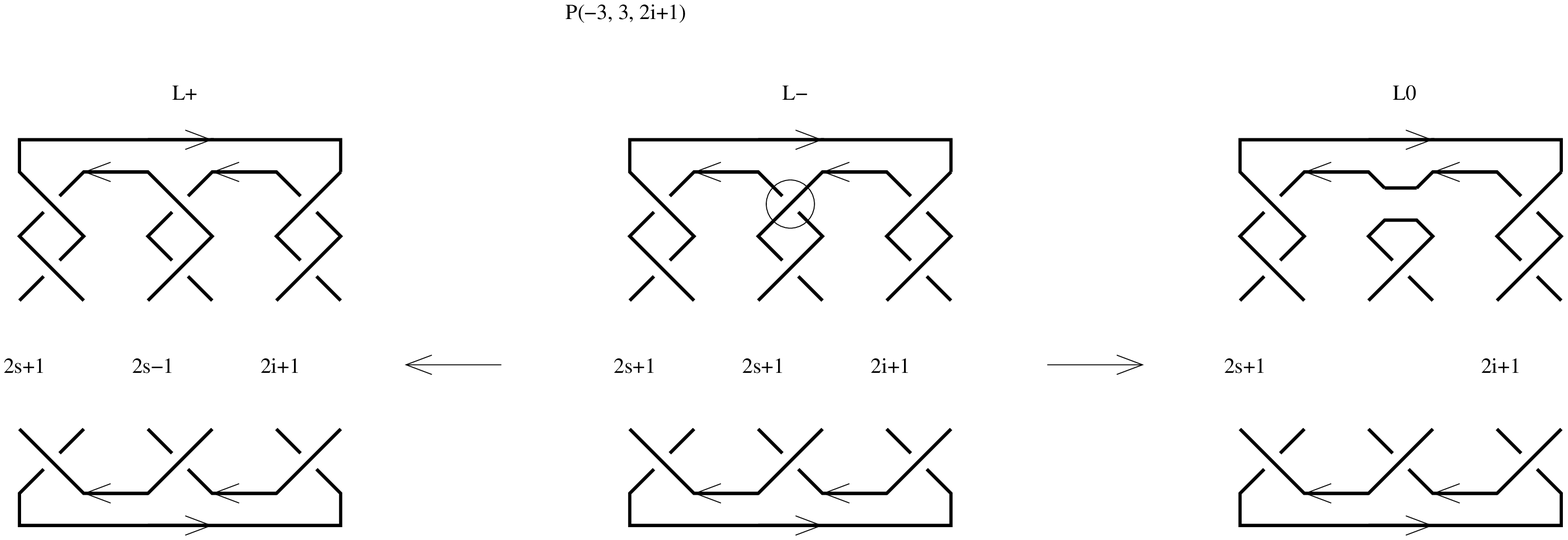}}}
	\caption{The center diagram is $P(-(2s+1), 2s+1, 2i+1)$ with the indicated orientation. Skeining at the crossing surrounded by the circle we obtain the following. The leftmost diagram is  the $P(-(2s+1), 2s-1, 2i+1)$; the rightmost diagram is the $T(2, 2i-2s)$ link for which Lemma \ref{lem:T(2, 2k)} already provides the Jones polynomial.}\label{fig:skeiningpretzels-s}
\end{figure}

\subsection{Calculating the Jones invariant.}\label{subsec:inftyJones}

\begin{figure}[!ht]
	\psfrag{L+}{\Huge$L_{+}$}
	\psfrag{L-}{\Huge$L_{-}$}
	\psfrag{L0}{\huge$L_{0}$}
	\psfrag{2i+1}{\huge$2i+1$}
	\psfrag{2s+1}{\huge$2s+1$}
	\psfrag{2s-1}{\huge$2s-1$}
	\psfrag{P(-3, 1, 2i+1)}{\Huge$P(-(2s+1), 2s-1, 2i+1)$}
	\centerline{\scalebox{.5}{\includegraphics{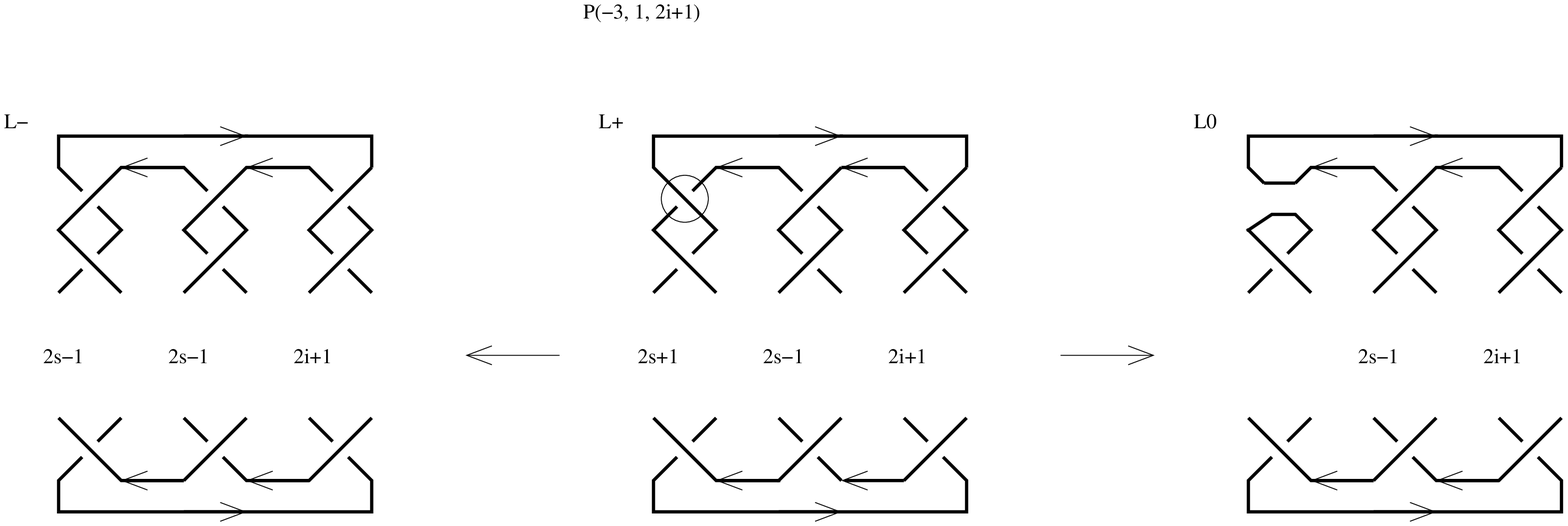}}}
	\caption{The center diagram is $P(-(2s+1), 2s-1, 2i+1)$ with the indicated orientation. Skeining at the crossing surrounded by the circle we obtain the following. The leftmost diagram is $P(-(2s-1), 2s-1, 2i+1)$; the rightmost diagram is the $T(2, 2i+2s)$ link for which Lemma \ref{lem:T(2, 2k)} already provides the Jones polynomial.}\label{fig:skeiningpretzelscd-s}
\end{figure}

\begin{theorem}\label{thm:Pretzelknotsinfty}
Let $i>s+3/2\geq 5/2$ be positive integers. The Jones polynomial of the pretzel knot $P(-(2s+1), 2s+1, 2i+1)$ is
\begin{align}\label{form:form}
1 + t^{-2i-1}\bigg[  t^{2s+1} + t^{-(2s+1)}  - ( t + t^{-1} )  - 2s + \sum_{j=1}^{2s}(-1)^{j+1}\, (2s+1-j)\, ( t^{j} + t^{-j} ) \bigg]
\end{align}
\end{theorem}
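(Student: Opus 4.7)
The plan is to prove the formula by induction on $s$, using the two skein operations depicted in Figures \ref{fig:skeiningpretzels-s} and \ref{fig:skeiningpretzelscd-s} together with the closed form for $V(T(2, 2k))$ from Lemma \ref{lem:T(2, 2k)}. The base case $s=1$ is exactly Theorem \ref{thm:Pretzelknots}. Note that for the inductive step at $s \ge 2$, the inductive hypothesis is applied at $s-1 \ge 1$ with the same $i$, and the hypothesis $i > s + 3/2$ guarantees both $i > (s-1) + 3/2$ and that the torus links $T(2, 2i \pm 2s)$ fall within the range of validity of Lemma \ref{lem:T(2, 2k)}.

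For the inductive step, the first skein operation (Figure \ref{fig:skeiningpretzels-s}) yields
\[
t V(P(-(2s+1), 2s+1, 2i+1)) = t^{-1} V(P(-(2s+1), 2s-1, 2i+1)) + (t^{-1/2} - t^{1/2}) V(T(2, 2i-2s)),
\]
and the second (Figure \ref{fig:skeiningpretzelscd-s}) yields
\[
t^{-1} V(P(-(2s+1), 2s-1, 2i+1)) = t V(P(-(2s-1), 2s-1, 2i+1)) + (t^{1/2} - t^{-1/2}) V(T(2, 2i+2s)).
\]
Eliminating the intermediate term $V(P(-(2s+1), 2s-1, 2i+1))$ between these two equations produces the clean one-step recursion
\[
V(P(-(2s+1), 2s+1, 2i+1)) = V(P(-(2s-1), 2s-1, 2i+1)) + (t^{-1/2} - t^{-3/2}) \bigl[ V(T(2, 2i+2s)) - V(T(2, 2i-2s)) \bigr].
\]
Substituting the formula from Lemma \ref{lem:T(2, 2k)} into the two torus-link terms, the alternating sums share the common range $0 \le m \le 2(i-s)-3$, so their difference collapses to a single alternating sum over the window $2(i-s)-2 \le m \le 2(i+s)-3$ together with four boundary monomials arising from $t^{-2k}$ and $t^{-2k+2}$. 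Multiplying through by the factor $(t^{-1/2} - t^{-3/2})$ then gives the explicit polynomial increment to be added to the inductive hypothesis.

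It then remains to check that this increment matches the difference $Q_s - Q_{s-1}$, where I write the claimed formula as $1 + t^{-2i-1} Q_s$ with $Q_s$ the bracketed expression in (\ref{form:form}). Passing from $Q_{s-1}$ to $Q_s$ appends two new terms at $j = 2s-1$ and $j = 2s$ with respective coefficients $2$ and $-1$, shifts every older coefficient in the alternating sum by $2(-1)^{j+1}$, replaces the extreme monomials $t^{\pm(2s-1)}$ by $t^{\pm(2s+1)}$, and decrements the constant by $2$. Each of these four types of changes is matched by a specific block of terms on the right-hand side of the recursion after multiplication by $t^{2i+1}$. The main obstacle is the bookkeeping needed to align these four sources of contributions and to track the cancellations at the boundary of the alternating sum; once both sides are expanded into monomials of $t$, the verification reduces to a routine coefficient-by-coefficient comparison that can be carried out for an arbitrary value of $s$.
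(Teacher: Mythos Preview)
Your proposal is correct and follows essentially the same route as the paper: both arguments set up the same two-step skein recursion from Figures \ref{fig:skeiningpretzels-s} and \ref{fig:skeiningpretzelscd-s}, eliminate the intermediate pretzel to obtain $V(P(-(2s+1),2s+1,2i+1)) - V(P(-(2s-1),2s-1,2i+1)) = t^{-1}(t^{1/2}-t^{-1/2})\bigl[V(T(2,2i+2s))-V(T(2,2i-2s))\bigr]$, invoke Lemma \ref{lem:T(2, 2k)}, check the base case $s=1$ against Theorem \ref{thm:Pretzelknots}, and then verify that the claimed formula satisfies the recursion. The only difference is cosmetic: the paper writes out the expanded increment and the difference $Q_s - Q_{s-1}$ explicitly, whereas you describe the same coefficient comparison in prose.
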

\begin{proof}
We begin by establishing the following recurrence relation with the help of Figures \ref{fig:skeiningpretzels-s} and \ref{fig:skeiningpretzelscd-s}. These figures illustrate how to skein starting from $P(-(2s+1), 2s+1, 2i+1)$ in the indicated crossings and are an adaptation of the argument and calculations in Theorem \ref{thm:Pretzelknots}; the role of the trivial knot there corresponds here to $P(-(2s-1), 2s-1, 2i+1 )$. Specifically, skeining where indicated in Figure \ref{fig:skeiningpretzels-s}, we express $V(P(-(2s+1), 2s+1, 2i+1 ))$ in terms of $V(P(-(2s+1), 2s-1, 2i+1 ))$ and of $V(T(2, 2i-2s))$. Then skeining where indicated in Figure \ref{fig:skeiningpretzelscd-s}, we express $V(P(-(2s+1), 2s-1, 2i+1 ))$ in terms of $V(P(-(2s-1), 2s-1, 2i+1 ))$ and of $V(T(2, 2i+2s))$. Composing we obtain the following.

\begin{align*}\label{form:formform}
&V(P(-(2s+1), 2s+1, 2i+1 )) - V(P(-(2s-1), 2s-1, 2i+1 )) =\\
&= t^{-1}(t^{1/2}-t^{-1/2})\bigg( V(T(2, 2i+2s)) - V(T(2, 2i-2s)) \bigg) = \\
&=  (t^{-2}-t^{-1})\bigg( t^{-2i-2s} + t^{-2i-2s+2} +  \sum_{j=0}^{2i+2s-3}(-1)^j\, t^{-j} - t^{-2i+2s} - t^{-2i+2s+2} - \sum_{j=0}^{2i-2s-3}(-1)^j\, t^{-j} \bigg)  = \\
&\underset{i>s+3/2}{=}   (t^{-2}-t^{-1})t^{-2i-1}\bigg( t^{-2s+1}   + t^{-2s+3} - t^{2s+1} - t^{2s+3} +\sum_{j=2i-2s-2}^{2i+2s-3}(-1)^{j}\, t^{2i-j+1}   \bigg)  = \\
&=  t^{-2i-1}\bigg( t^{-2s-1} - t^{-2s}  + t^{-2s+1} - t^{-2s+2} - t^{2s-1} + t^{2s}- t^{2s+1} + t^{2s+2} + \sum_{j=2i-2s-2}^{2i+2s-3}(-1)^{j}\, t^{2i-j-1}  \\
&  \qquad \qquad - \sum_{j=2i-2s-2}^{2i+2s-3}(-1)^{j}\, t^{2i-j} \bigg)  =  t^{-2i-1}\bigg( t^{-2s-1} - t^{-2s}  + t^{-2s+1} - t^{-2s+2} - t^{2s-1} + t^{2s}- t^{2s+1} + t^{2s+2} + \\
&+ \sum_{j=-2s+2}^{2s+1}(-1)^{j+1}\, t^{j}  + \sum_{j=-2s+3}^{2s+2}(-1)^{j+1}\, t^{j} \bigg)  = t^{-2i-1}\bigg(  t^{2s+1} -t^{2s} + t^{2s-1} +2 \sum_{j=-2s+2}^{2s-2}(-1)^{j+1}t^j +  t^{-2s+1}-\\
&- t^{-2s}+t^{-2s-1} \bigg) =  t^{-2i-1}\bigg(  (t^{2s+1}+t^{-(2s+1)}) -(t^{2s}+t^{-2s}) + (t^{2s-1}+  t^{-2s+1}) + 2 \sum_{j=1}^{2s-2}(-1)^{j+1}(t^j + t^{-j}) - 2 \bigg)
\end{align*}

Now we prove that setting $s=1$ in Formula \ref{form:form} we obtain the Jones polynomial for $P(-3, 3, 2i+1)$ calculated in Theorem \ref{thm:Pretzelknots}.
\begin{align*}
&1 + t^{-2i-1}\bigg[  t^{2\cdot 1+1} + t^{-(2\cdot +1)}  - ( t + t^{-1} )  - 2\cdot 1 + \sum_{j=1}^{2\cdot 1}(-1)^{j+1}\, (2\cdot 1 +1-j)\, ( t^{j} + t^{-j} ) \bigg] \\
& = 1 + t^{-2i-1}\bigg[  t^{3} + t^{-3}  - ( t + t^{-1} )  - 2\cdot 1 + 2 (t + t^{-1}) - (t^2 + t^{-2})\bigg] = \\
&= 1 +  t^{-2i+2} +  t^{-2i-4} + t^{-2i} + t^{-2i-2} - 2 t^{-2i-1} - t^{-2i+1} - t^{-2i-3} = \\
& =t^{-2i-4} - t^{-2i-3}+ t^{-2i-2}- 2 t^{-2i-1}+ t^{-2i} -  t^{-2i+1}+  t^{-2i+2} + 1
\end{align*}

Finally, we prove Formula \ref{form:form} satisfies the recursion relation above.

\begin{align*}
&1 + t^{-2i-1}\bigg[  t^{2s+1} + t^{-(2s+1)}  - ( t + t^{-1} )  - 2s + \sum_{j=1}^{2s}(-1)^{j+1}\, (2s+1-j)\, ( t^{j} + t^{-j} ) \bigg] - \\
& \qquad \qquad -1 - t^{-2i-1}\bigg[  t^{2s-1} + t^{-(2s-1)}  - ( t + t^{-1} )  - (2s-2) + \sum_{j=1}^{2s-2}(-1)^{j+1}\, (2s-1-j)\, ( t^{j} + t^{-j} ) \bigg] = \\
& = t^{-2i-1}\bigg[  t^{2s+1} + t^{-(2s+1)}   - (t^{2s} + t^{-2s}) + 2(t^{2s-1} + t^{-(2s-1)})  - ( t^{2s-1} + t^{-(2s-1)})  -2 - \\
& + \sum_{j=1}^{2s-2}(-1)^{j+1}\, 2\, ( t^{j} + t^{-j} ) \bigg] = t^{-2i-1}\bigg[  t^{2s+1} + t^{-(2s+1)}   - (t^{2s} + t^{-2s}) + (t^{2s-1} + t^{-(2s-1)})  -2 - \\
& +  2\sum_{j=1}^{2s-2}(-1)^{j+1} ( t^{j} + t^{-j} ) \bigg]
\end{align*}

This concludes the proof.

\end{proof}

\begin{cor}
Let $s$ be a positive integer. The family of knots $\big(  P(-(2s+1), 2s+1, 2i+1) \big)_{i\in \mathbf{Z}^{+}, i>s+3/2}$ is made up of distinct prime knots.
\end{cor}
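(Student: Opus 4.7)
The statement has two parts: primality (which subsumes nontriviality) and pairwise distinctness. For primality, I would simply quote the Proposition stated at the end of the Introduction, which guarantees that $P(-(2s+1),2s+1,2i+1)$ is a prime (genus one, hyperbolic) knot whenever $i>s\geq 1$. Our hypothesis $i>s+3/2$ (with $s\geq 1$) is strictly stronger, so the Proposition applies and each member of the family is prime. Nothing further is needed for this part.

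For pairwise distinctness, the plan is to exploit the structural form of the Jones polynomial computed in Theorem \ref{thm:Pretzelknotsinfty}. Write
\[
V\bigl(P(-(2s+1),2s+1,2i+1)\bigr)=1+t^{-2i-1}\,Q_{s}(t),
\]
where
\[
Q_{s}(t)=t^{2s+1}+t^{-(2s+1)}-(t+t^{-1})-2s+\sum_{j=1}^{2s}(-1)^{j+1}(2s+1-j)(t^{j}+t^{-j})
\]
depends only on $s$, not on $i$. The key observation is that $Q_{s}(t)$ is a fixed Laurent polynomial, so varying $i$ merely shifts it by the monomial factor $t^{-2i-1}$ (and adds the constant $1$). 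Hence distinctness of the knots will follow from showing that this shift is detectable in the polynomial, i.e.\ that different values of $i$ produce different Laurent polynomials.

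The cleanest way to detect the shift is to track the minimum degree in $t$. I would verify directly from the expression for $Q_{s}(t)$ that its lowest-degree monomial is $t^{-(2s+1)}$, with coefficient $+1$, since every other summand involves a power of $t$ strictly greater than $-(2s+1)$. Consequently the minimum degree of $t^{-2i-1}Q_{s}(t)$ equals $-2i-2s-2$, and because $-2i-2s-2<0$ under our hypotheses, this is also the minimum degree of $V(P(-(2s+1),2s+1,2i+1))$ (the $+1$ contributes degree $0$ and cannot cancel it). The function $i\mapsto -2i-2s-2$ is strictly decreasing, so distinct values of $i$ yield Jones polynomials with distinct minimum degrees, hence distinct Jones polynomials, hence distinct knots.

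The only nontrivial step is the bookkeeping identification of the lowest-degree monomial of $Q_{s}(t)$, but this is routine inspection rather than a genuine obstacle; everything substantive has already been done inside Theorem \ref{thm:Pretzelknotsinfty}. The Corollary then follows immediately by combining this distinctness with the primality statement quoted from the Proposition.
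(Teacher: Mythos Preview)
Your proposal is correct and follows the same route as the paper: the paper's proof is the single sentence ``This is a direct consequence of Theorem \ref{thm:Pretzelknotsinfty},'' and your minimum-degree argument simply makes explicit why the Jones polynomials in that theorem are pairwise distinct (and you additionally cite the Proposition for primality, which the paper leaves implicit). Your bookkeeping on the lowest monomial of $Q_s(t)$ is accurate, and the observation that all of $t^{-2i-1}Q_s(t)$ lives in strictly negative degrees (since $i>s+3/2$ forces $-2i+2s<0$) cleanly prevents any interference with the constant term $1$.
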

\begin{proof}
This is a direct consequence of Theorem \ref{thm:Pretzelknotsinfty}.
\end{proof}

\section{Directions for further work}\label{sec:furtherwork}

The work that led to the current article began by considering that by tying a knot in a band along with a twist that compensates for the knot's writhe, then we get a new knot with exactly the same Seifert matrix as the first one. Thus,  starting with a given knot and spanning surface for it, we surely obtain infinitely many distinct knots that have exactly the same Seifert pairing as the original, and hence the same Alexander module. The technicalities with the telling apart of these knots led us to a simplification which materialized into this article. We plan to address our original idea in the near future.

For instance, in Figure \ref{fig:Seifert} we give an example of the use of Seifert's method. The reader can verify by calculating the Jones polynomial that these  knots are distinct.

\begin{figure}[!ht]
	\psfrag{P(-3, 3, 3)}{\Huge$P(-3, 3, 3)$}
	\psfrag{P(-3, 3, 2i+1)}{\Huge$P(-3, 3, 2i+1)$}
	\psfrag{2i+1}{\huge$2i+1$}
	\psfrag{eq}{\Huge$c=2b-a$}
	\centerline{\scalebox{.5}{\includegraphics{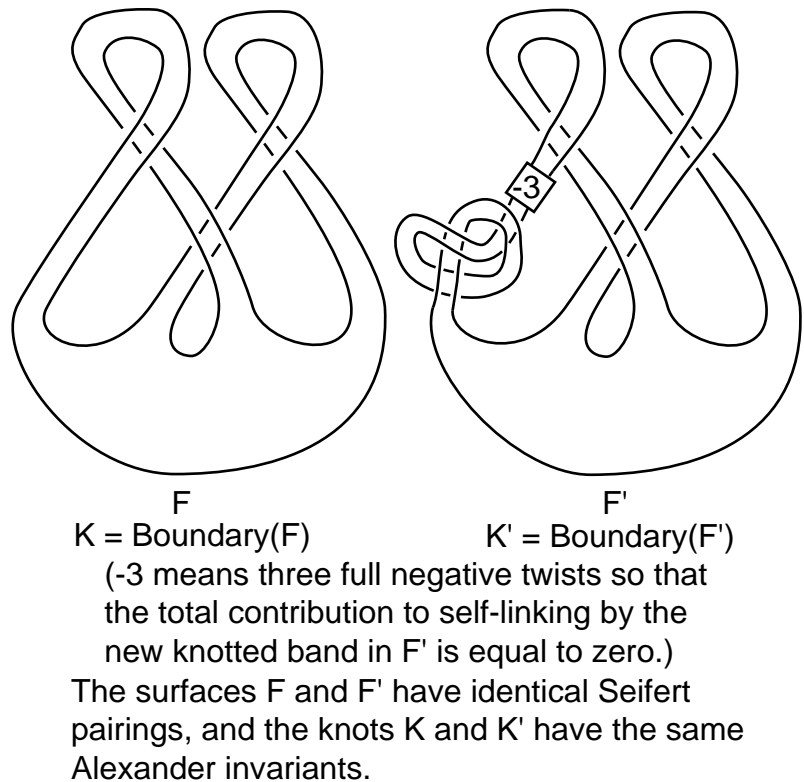}}}
	\caption{The Seifert method.}\label{fig:Seifert}
\end{figure}

\section*{Acknowledgments}
P.L. acknowledges support from FCT (Funda\c c\~ao para a Ci\^encia e a Tecnologia), Portugal, through project FCT EXCL/MAT-GEO/0222/2012, ``Geometry and Mathematical Physics''.

\end{document}